\DeclareMathOperator{\supp}{supp}
\theoremstyle{plain}
\newtheorem{theorem}{Theorem}
\newtheorem{corollary}{Corollary}
\theoremstyle{definition}
\newtheorem{definition}{Definition}	%[section]
\newtheorem{remark}{Remark}	%[section]
\author{Arseniy V. Akopyan\thanks{The author was supported by the Dynasty foundation and the President's of Russian Federation grant MK-3138.2014.1.} \and 
Sergey A. Pirogov\thanks{The author was supported by the Russian Foundation for Basic Research grant 13-01-12410} \and
Aleksandr N. Rybko\thanks{The author was supported by the Russian Foundation for Basic Research grant 14-01-00319}
}
\title{Invariant measures of genetic recombination process}
\begin{document}
	\maketitle
	
\begin{abstract}
	We construct the non-linear Markov process connected with biological model of bacterial genome recombination.
	The description of invariant measures of this process gives us the solution of one problem in elementary probability theory.
\end{abstract}

The genetic recombination in bacteria can be formally described in the following way~\cite{pirogov2013recombination}.
Let $\Lambda=\{1, 2, \dots, n\}$ be a finite set and for any $i \in \Lambda$ we have a finite alphabet $K_i$.
We call by genomes elements of the set $X =\prod_{i \in \Lambda} K_i$, i.~e. words in the alphabet depending on $i$.
Suppose we have a set $\mathcal{J}$ of subsets $I \subset \Lambda$.
These subsets we call \emph{frames}.
A system of frames is called the $T_0$-system if for any $i \ne j \in \Lambda$ there is a~frame $I\in \mathcal{J}$ for which either $i \in I$, $j \notin I$ or $j \in I$, $i \notin I$.

This property we named $T_0$ by the analogue with Kolmogorov's $T_0$-axiom in the general topology.

The restriction of a word $x = \{x_i, i \in \Lambda\}$ on a subset $M \subset \Lambda$ we denote by $x_M$.

\begin{definition}
	\label{def:i-recombination}
	The transform of a word $x=(x_I,x_{\Lambda\setminus I})$ to the word $\tilde x=(y_I, x_{\Lambda\setminus I})$ is called the \emph{$I$-recombination} of the word $x$ with the word $y$.
\end{definition}

Let us suppose that for any $I \in \mathcal{J}$ we have a similarity function $\phi_I(x_I, y_I)$ which we suppose to be symmetric ($\phi_I(x_I, y_I)=\phi_I(y_I, x_I)$) and strictly positive. % and irreducible in~the~following sense. 
 For given $I$ we consider the symmetric matrix $\Phi_I=(\phi_I(x_I, y_I))$. % than for some $n$ the matrix $\sum_{k=0}^n \Phi_I^k$ is strictly positive.
% It means that the matrix $\Phi_I$ can not be decomposed in square blocks on the main diagonal, i.~e. the graph of matrix $\Phi_I$ is connected.
The set of matrices $\mathcal{R}=\{\Phi_I, I \in \mathcal{J}\}$ we call the \emph{legend} of recombination.
% A legend of recombination is assumed irreducible in the sense that all those matrices are irreducible.

Suppose we have a probability measure $\mu$ on the space $X$.
A non-linear Markov process \cite{mckean1966class} of recombination is defined by its transition rates.
By definition, for each $I \in \mathcal{J}$ the transition rate $\lambda_I(x, \tilde x, \mu)$ of the word $x=(x_I,x_{\Lambda\setminus I})$ to the word $\tilde x=(y_I, x_{\Lambda\setminus I})$ equals $\phi_I(x_I, y_I)\mu_I(y_I)$.
(Here and below we denote by $\mu_I$ the corresponding marginal distribution, i.~e. the projection of the measure $\mu$).

We suppose that we have the initial measure $\mu^0$, i.~e.~the distribution of the word~$x(0)$, and for $t>0$ transition rates $\lambda_I(x, \tilde x, \mu^t)$ are defined by the measure $\mu^t$, which is the distribution of the word $x(t)$.
So, the distribution $\mu^t$ satisfies the nonlinear differential equations:
\begin{equation}
	\label{eq:diff equation}
	\frac{d \mu(x)}{dt}=\sum_I \sum_{y_I}\left(\phi_I(y_I, x_I)\mu_I(x_I)\mu(x_{\Lambda\setminus I}, y_I) - \phi_I(x_I, y_I) \mu_I (y_I) \mu(x)\right).
\end{equation}

% Let $\mu$ be a probability measure on $X$ and $\mu_i$ is its projection on $K_i$.
% The letter $a \in K_i$ is called $\mu$-essential if $\mu_i(a)>0$.
% The set of $\mu$-essential letters is called \emph{$\mu$-ground} $K_i(\mu)$ of the alphabet $K_i$.
% The product $\prod_i K_i(\mu)=X(\mu)$ is called $\mu$-ground of the space~$X$.
% Generally $X(\mu)$ can be strictly larger than $\supp \mu$.

For a given legend $\mathcal{R}$ let us define the following properties of the measure $\mu$.
\begin{definition}
	\label{def:r-stable measure}
	 Probability measure $\mu$ is called\\
	 a) $\mathcal{R}$-stable, if it is a fixed point for equation \eqref{eq:diff equation}.\\
	 b) $\mathcal{J}$-separated, if for any $I \in \mathcal{J}$ two sets of random variables $x_I$ and $x_{\Lambda \setminus I}$ are independent with respect to measure $\mu$.
\end{definition}

\begin{theorem}
	\label{thm:r-stable measure if T_o-sistem}
	The measure $\mu$ is $\mathcal{R}$-stable if and only if it is $\mathcal{J}$-separated.
\end{theorem}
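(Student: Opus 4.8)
The plan is to treat the two implications separately; both rest on computing the $I$-th summand of the right-hand side of \eqref{eq:diff equation}, which I abbreviate, using the symmetry $\phi_I(x_I,y_I)=\phi_I(y_I,x_I)$, by
$$G_I(x)=\sum_{y_I}\phi_I(x_I,y_I)\bigl(\mu_I(x_I)\,\mu(x_{\Lambda\setminus I},y_I)-\mu_I(y_I)\,\mu(x)\bigr),$$
so that $\mathcal R$-stability reads $\sum_I G_I(x)=0$ for every $x$. For the direction \emph{$\mathcal J$-separated $\Rightarrow\mathcal R$-stable} I would simply substitute $\mu(x)=\mu_I(x_I)\mu_{\Lambda\setminus I}(x_{\Lambda\setminus I})$ and $\mu(x_{\Lambda\setminus I},y_I)=\mu_I(y_I)\mu_{\Lambda\setminus I}(x_{\Lambda\setminus I})$ into $G_I$; every bracket then collapses to $\mu_I(x_I)\mu_I(y_I)\mu_{\Lambda\setminus I}(x_{\Lambda\setminus I})-\mu_I(y_I)\mu_I(x_I)\mu_{\Lambda\setminus I}(x_{\Lambda\setminus I})=0$, so $G_I\equiv0$ for every $I$ and a fortiori $\sum_I G_I\equiv0$. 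Note this uses only the symmetry of $\phi_I$.

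For the converse I would run an $H$-theorem, testing stationarity against the multiplier $\log\mu(x)$. Since $\sum_I G_I(x)=0$ pointwise,
$$0=\sum_x\log\mu(x)\sum_I G_I(x)=\sum_I S_I,\qquad S_I:=\sum_x\log\mu(x)\,G_I(x).$$
Writing $a=\mu(x_I,x_{\Lambda\setminus I})$, $b=\mu(x_{\Lambda\setminus I},y_I)$, $m=\mu_I(x_I)$, $m'=\mu_I(y_I)$ and exchanging $x_I\leftrightarrow y_I$ symmetrizes each $S_I$ into
$$S_I=\tfrac12\sum_{x_{\Lambda\setminus I}}\sum_{x_I,y_I}\phi_I(x_I,y_I)\,(\log a-\log b)\,(m\,b-m'\,a).$$
Introducing the conditional laws $r=a/m$ and $r'=b/m'$ of $x_{\Lambda\setminus I}$ given $x_I$, resp.\ $y_I$, one has $mb-m'a=mm'(r'-r)$ and $\log a-\log b=\log(m/m')+\log(r/r')$. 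The piece carrying $\log(m/m')$ does not depend on $x_{\Lambda\setminus I}$, so summing it over $x_{\Lambda\setminus I}$ produces the factor $\sum_{x_{\Lambda\setminus I}}(r'-r)=1-1=0$ and it drops out entirely, leaving
$$S_I=-\tfrac12\sum_{x_{\Lambda\setminus I}}\sum_{x_I,y_I}\phi_I(x_I,y_I)\,m\,m'\,(r-r')\log\frac{r}{r'}\le 0,$$
since $\phi_I>0$, $m,m'\ge0$, and $(r-r')\log(r/r')\ge0$. (Interpreted in terms of the Gibbs entropy $-\sum_x\mu\log\mu$, this is exactly the statement that $I$-recombination dissipates it, the reservoir being the conserved marginals $\mu_I$ and $\mu_{\Lambda\setminus I}$.)

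Because each $S_I\le0$ while $\sum_I S_I=0$, every $S_I$ vanishes, and strict positivity of $\phi_I$ then forces $r=r'$ whenever $m,m'>0$; that is, the conditional law of $x_{\Lambda\setminus I}$ given $x_I$ is independent of $x_I$, which is precisely $\mathcal J$-separation for the frame $I$. Running this over all $I\in\mathcal J$ gives the claim. I expect the one genuinely delicate point to be the singularity of $\log\mu$ at zeros of $\mu$. The way around it is to exploit stationarity off the support first: if $\mu(x)=0$ then the loss term of each $G_I$ vanishes, so $\sum_I G_I(x)=0$ forces each nonnegative gain $\mu_I(x_I)\sum_{y_I}\phi_I(x_I,y_I)\mu(x_{\Lambda\setminus I},y_I)$ to vanish separately; with $\phi_I>0$ this yields $\mu_I(x_I)=0$ or $\mu_{\Lambda\setminus I}(x_{\Lambda\setminus I})=0$, i.e.\ $\supp\mu=\supp\mu_I\times\supp\mu_{\Lambda\setminus I}$ for each $I$. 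Since $G_I$ then vanishes off $\supp\mu$ as well, restricting all sums above to $\supp\mu$ makes every logarithm finite and every ratio $r,r'$ strictly positive, so the computation is rigorous. As with the first implication, neither the $T_0$ axiom nor anything beyond symmetry and strict positivity of the $\phi_I$ is needed.
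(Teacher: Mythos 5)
Your proposal is correct, and the ``if'' direction coincides with the paper's (the same substitution and cancellation by symmetry of $\phi_I$). For the ``only if'' direction, however, you take a genuinely different route. The paper never argues at a fixed point: it deduces Theorem~\ref{thm:r-stable measure if T_o-sistem} from the stronger dynamical Theorem~\ref{thm:mu0 goes to bernoulli}, by showing that the minus-entropy $H(\mu)=\sum_x \mu(x)\ln\mu(x)$ is a Lyapunov function for equation~\eqref{eq:diff equation}; for each frame $I$ it rewrites $H$ through the Kullback--Leibler divergence of the conditional law $\mu(x_I,x_{\Lambda\setminus I})/\mu_{\Lambda\setminus I}(x_{\Lambda\setminus I})$ from $\mu_I$, which is the invariant law of an auxiliary linear Markov chain with rates $\phi_I(x_I,y_I)\mu_I(y_I)$, and then invokes the cited results \cite{batishcheva2005, malyshev2004random, pirogov2013recombination} for the strict decrease of that divergence. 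Your argument is the static counterpart of this $H$-theorem: since $\sum_x d\mu/dt=0$, your multiplier $\log\mu(x)$ computes exactly $dH/dt$ at the stationary $\mu$, your $S_I$ is the contribution of frame $I$ to it, and your symmetrization identity is precisely the entropy-production formula that the paper outsources to the references. What your version buys: it is self-contained (no convergence theory, no citation for KL-monotonicity), it uses only the fixed-point equation rather than the flow, and it treats the degenerate points honestly via the support factorization $\supp\mu=\supp\mu_I\times\supp\mu_{\Lambda\setminus I}$, whereas the paper sidesteps zeros by positivity and irreducibility assumptions. What the paper's version buys: the stronger Theorem~\ref{thm:mu0 goes to bernoulli} (every trajectory converges to the $\mathcal{J}$-separated set with prescribed one-point marginals), which your stationary computation does not yield. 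One small point you should make explicit: when symmetrizing $S_I$ under $x_I\leftrightarrow y_I$, also restrict $y_I$ to $\supp\mu_I$ --- terms with $\mu_I(y_I)=0$ vanish identically, since then $\mu(y_I,x_{\Lambda\setminus I})=0$ as well, so the bracket $m\,b-m'\,a$ is zero before any logarithm appears --- so that the domain of summation is symmetric and all logarithms are finite; this is harmless but needed for the exchange to be legitimate.
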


	We supposed that $\mathcal{J}$ is $T_0$-system (for Theorem~\ref{thm:r-stable measure if T_o-sistem} this condition can be omited, see Remark~\ref{rem:not T-0 system}).

The ``if'' part of Theorem \ref{thm:r-stable measure if T_o-sistem} is trivial:
any $\mathcal{J}$-separated measure $\mu$ is $\mathcal{R}$-stable.
Indeed, if the measure $\mu$ is $\mathcal{J}$-separated, then $\mu(x_{\Lambda\setminus I}, y_I)=\mu_{\Lambda \setminus I}(x_{\Lambda\setminus I})\mu(y_I)$, and $\mu(x)=\mu_{\Lambda\setminus I}(x_{\Lambda\setminus I})\mu(x_I)$.
Therefore, by the symmetry of the function $\phi_I$, all the summands in the r.~h.~s. of differential equation \eqref{eq:diff equation} are $0$.

% \begin{corollary}
% 	\label{cor:X(mu)=supp(mu) J-recombination}
% 	For any $\mathcal{J}$-separated measure $\mu$ we have $X(\mu)=\supp(\mu)$.
% \end{corollary}
% (I.~e. if we throw out the letters having probability $0$, then the support of the $\mathcal{J}$-separated measure coincides with the whole space $X$.)

Now we derive Theorem~\ref{thm:r-stable measure if T_o-sistem} in direction ``only if'' from the stronger theorem.

\begin{theorem}
	\label{thm:mu0 goes to bernoulli}
	Let $\mu^0$ be an arbitrary probability measure on $X$.
	Then the trajectory $\mu^t$ in the space of measure (the solution of differential equation~\eqref{eq:diff equation}with the initial condition~$\mu^0$) for $t \rightarrow \infty$ tends to the set to of the $\mathcal{J}$-separated measures $\nu$ such that $\nu_i=\mu_i^0$.
\end{theorem}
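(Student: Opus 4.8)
\noindent
The plan is to exhibit the Shannon entropy as a strict Lyapunov function for equation~\eqref{eq:diff equation} and then run a LaSalle-type invariance argument on the compact simplex of probability measures on the finite set $X$.

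First I would record the preliminaries. The right-hand side of~\eqref{eq:diff equation} is polynomial in the coordinates $\mu(x)$, hence locally Lipschitz; since the probability simplex on $X$ is compact and positively invariant (each gain term $\sum_{y_I}\phi_I(y_I,x_I)\mu_I(x_I)\mu(x_{\Lambda\setminus I},y_I)$ is nonnegative, so no coordinate can cross $0$ from above), the trajectory $\mu^t$ exists for all $t\ge 0$ and stays in the simplex. I would then show that every single-site marginal is conserved: summing~\eqref{eq:diff equation} over all $x$ with $x_j=a$ and splitting the frames into those containing $j$ and those not, the symmetry $\phi_I(x_I,y_I)=\phi_I(y_I,x_I)$ makes gain and loss cancel frame by frame, so $\mu_j^t=\mu_j^0$ for all $t$ (the same computation in fact gives $\mu_I^t=\mu_I^0$ for every $I\in\mathcal J$). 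This already yields the constraint $\nu_i=\mu_i^0$ and confines the trajectory to a compact affine slice.

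The core step is the entropy computation. Put $H(\mu)=-\sum_x\mu(x)\log\mu(x)$; since $\sum_x\dot\mu(x)=0$ we have $\frac{dH}{dt}=-\sum_x\log\mu(x)\,\dot\mu(x)$. Writing $x=(a,w)$ with $a=x_I,\ w=x_{\Lambda\setminus I}$ and symmetrizing the $I$-th contribution under the swap $a\leftrightarrow b$ (where $b=y_I$), using the symmetry of $\phi_I$, I expect to reduce it to
\[
\frac{dH}{dt}\Big|_I=\frac12\sum_{a,b,w}\phi_I(a,b)\,\mu_I(a)\mu_I(b)\,\big(r_w(a)-r_w(b)\big)\log\frac{r_w(a)}{r_w(b)},
\]
where $r_w(a):=\mu(a,w)/\mu_I(a)$ is the conditional law of $x_{\Lambda\setminus I}=w$ given $x_I=a$. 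The key cancellation is that the cross term carrying $\log\frac{\mu_I(a)}{\mu_I(b)}$ drops out after summing over $w$, because $\sum_w r_w(a)=1$ for every $a$. Since $(u-v)\log\frac{u}{v}\ge 0$, every summand is nonnegative, whence $\frac{dH}{dt}\ge 0$; and $\frac{dH}{dt}\big|_I=0$ if and only if $r_w(a)$ is independent of $a$, i.e.\ the conditional law of $x_{\Lambda\setminus I}$ given $x_I$ does not depend on $x_I$, which is exactly $I$-separation. Summing over $I$, the function $H$ is nondecreasing along trajectories and $\frac{dH}{dt}=0$ precisely on the set of $\mathcal J$-separated measures, which by the ``if'' part of Theorem~\ref{thm:r-stable measure if T_o-sistem} coincides with the fixed-point set.

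I would finish as follows: $H$ is bounded and nondecreasing, so $\frac{dH}{dt}\to 0$; the $\omega$-limit set of $\mu^t$ is nonempty, compact, invariant, and contained in $\{dH/dt=0\}$, i.e.\ in the $\mathcal J$-separated measures, and the conserved marginals force $\nu_i=\mu_i^0$ on it. This proves the theorem, and reading the same equivalence at a fixed point gives the ``only if'' direction of Theorem~\ref{thm:r-stable measure if T_o-sistem}. The step I expect to be the main obstacle is the boundary of the simplex, where $\mu(x)=0$ makes $\log\mu(x)$ and the ratios $r_w(a)$ singular; I would control this by observing that the support $\{x:\mu^t(x)>0\}$ is nondecreasing in $t$ (again because the gain terms are nonnegative, so by a Gr\"onwall estimate a positive coordinate stays positive while a vanishing one with positive gain turns positive), hence eventually constant. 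After the support stabilizes the analysis takes place in the relative interior of a fixed face, where $H$ is smooth and all the manipulations above are justified.
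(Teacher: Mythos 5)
Your proposal is correct, and at the top level it is the same strategy as the paper's: Shannon entropy as a Lyapunov function, conservation of the single-site marginals, and a LaSalle-type passage to the $\omega$-limit set. The key step, however --- strict monotonicity of $H$ off the set of $\mathcal{J}$-separated measures --- is carried out by a genuinely different argument. The paper freezes one frame $I$ at a time (equation~\eqref{eq:modification of uravnenie}), observes that $\mu_I$ and $\mu_{\Lambda\setminus I}$ are conserved by that partial dynamics, so the $I$-dynamics is a \emph{linear} Kolmogorov equation with constant rates $\phi_I(x_I,y_I)\mu_I(y_I)$ (irreducible on $\supp\mu_I$ and reversible with respect to $\mu_I$ by the symmetry of $\phi_I$), and then decomposes the minus-entropy as
\begin{equation*}
\sum_x \mu(x)\ln\mu(x)\;=\;\sum_{x_{\Lambda\setminus I}}\mu_{\Lambda\setminus I}(x_{\Lambda\setminus I})\,H\bigl(\mu(\cdot\mid x_{\Lambda\setminus I})\,\big|\,\mu_I\bigr)\;+\;H(\mu_I)+H(\mu_{\Lambda\setminus I}),
\end{equation*}
quoting the classical fact that the Kullback--Leibler divergence of an irreducible chain's law from its stationary law strictly decreases. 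You instead prove the same inequality by direct symmetrization of $dH/dt$, arriving at the explicit entropy-production identity with the nonnegative terms $(r_w(a)-r_w(b))\log\frac{r_w(a)}{r_w(b)}$ and the clean cancellation of the $\log\frac{\mu_I(a)}{\mu_I(b)}$ cross term. Your route is self-contained (no citation of Markov-chain KL decay is needed) and makes the equality case transparent; its price is that all manipulations must be justified near the boundary of the simplex, which you handle by support stabilization via Gr\"onwall --- in fact with more care than the paper itself, which simply assumes $\mu_{\Lambda\setminus I}(x_{\Lambda\setminus I})>0$. One stylistic caution: ``$H$ bounded and nondecreasing, so $dH/dt\to 0$'' is not valid by itself (it needs Barbalat's lemma); but the clause you write immediately after --- invariance of the $\omega$-limit set together with strictness of the entropy production --- is the correct and sufficient form of the argument, so nothing is lost.

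One incidental claim should be deleted: the parenthetical assertion that the same computation gives $\mu_I^t=\mu_I^0$ for every $I\in\mathcal J$ under the full equation~\eqref{eq:diff equation}. Each frame's own term conserves $\mu_I$, but the terms of \emph{other} frames do not when frames overlap partially. For example, with $\Lambda=\{1,2,3\}$ and $\mathcal J=\{\{1,2\},\{2,3\}\}$, the $\{2,3\}$-recombination destroys correlations between sites $1$ and $2$, so if $x_1$ and $x_2$ are initially correlated then $\mu_{\{1,2\}}^t$ cannot stay equal to $\mu_{\{1,2\}}^0$ (indeed it must converge to $\mu_1^0\otimes\mu_2^0$ by Theorem~\ref{thm:measure is bernoulli}). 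Fortunately your proof only uses conservation of the single-site marginals $\mu_j$, which is correct precisely because each site lies entirely inside or entirely outside every frame, so this slip does not affect the argument.
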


\begin{remark}
	\label{rem:measure product}
As we will see later if $\mathcal{J}$ is $T_0$-system, then this set of $\mathcal{J}$-separated measures consist of the unique point $\nu=\prod_i \mu_i^0$.
\end{remark}

\begin{proof}[Proof of Theorem~\ref{thm:mu0 goes to bernoulli}]
The proof is based on Lyapunov method.
% Let $\mu^0$ be an arbitrary probability measure.
% As in Lemma~\ref{lem:R fill the space}, we can restrict ourselves by the case $X(\mu^0)=X$, excluding non-used letters from the alphabets.
% Then, according to Lemma~\ref{lem:R fill the space modification}, for $t>0$ we have $\supp \mu_t=X$.

For the Lyapunov function we take the Shannon entropy of the measure $\mu$ (with the minus sign) $H(\mu)=\sum_x \mu(x)\ln \mu(x)$.

For a given frame $I \subset \mathcal{J}$ let us consider the differential equation containing only the summands with this $I$:
\begin{equation}
	\label{eq:modification of uravnenie}
	\frac{d \mu(x)}{dt} = \sum_{y_I}\left(\phi_I(y_I, x_I) \mu_I(x_I)\mu(x_{\Lambda\setminus I}, y_I) - \phi_I (x_I, y_I) \mu_I(y_I) \mu(x)\right).
\end{equation}
Summing this equation over $x_{\Lambda \setminus I}$ we get $\frac{d \mu_I}{dt}=0$.
So, $\mu_I$ does not depend on time, and the right hand side of~\eqref{eq:modification of uravnenie} we can consider as the direct Kolomogorov equation (i.~e. the linear differential equation for the measure) for the Markov process with constant transition rates $x=(x_I, x_{\Lambda\setminus I}) \rightarrow \tilde x=(y_I, x_{\Lambda\setminus I})$ equal to $\lambda_I(x_I, y_I)=\phi_I(x_I,y_I)\mu_I(y_I)$.

This process does not change $x_{\Lambda\setminus I}$ but it changes the mutual distribution of $x_I$ and $x_{\Lambda\setminus I}$.
For fixed $x_{\Lambda\setminus I}$ it is irreducible continuous time Markov chain on the set $\supp \mu_I$.
We suppose also that $\mu_{\Lambda\setminus I}(x_{\Lambda\setminus I})>0$.

Consider the process with fixed $x_{\Lambda\setminus I}$, the previous transition rates $\lambda_I(x_I, y_I) = \phi_I(x_I,y_I)\mu_I(y_I)$ and an arbitrary ``wrong''  initial distribution $\tilde \mu_I$ on the set $\supp \mu_I$.
It is well known \cite{feller10introduction}, that due to the irreducibility the distribution of this Markov process converges to $\mu_I$ as $t \rightarrow \infty$.

Moreover, the Kullback--Leibler divergence $H(\tilde \mu|\mu)=\sum_{x_I}\tilde \mu_I(x_I)\ln \frac{\tilde \mu_I(x_I)}{\mu_I(x_I)}$ has the strictly negative time derivative via the direct Kolmogorov equation for this process \cite{batishcheva2005, malyshev2004random, pirogov2013recombination}.
Thus if we set $\tilde \mu_I = \frac{\mu(x_I, x_{\Lambda \setminus I})}{\mu_{\Lambda \setminus I}(x_{\Lambda \setminus I})}$, then for the evolution governed by equation~\eqref{eq:modification of uravnenie} the (minus-)entropy
\begin{multline}
H(\mu)=\sum_{x_I, x_{\Lambda\setminus I}} \mu(x_I, x_{\Lambda \setminus I})\ln \mu(x_I, x_{\Lambda \setminus I})=\\
=
\sum_{x_I, x_{\Lambda\setminus I}} \mu_{\Lambda \setminus I}(x_{\Lambda \setminus I})\frac{\mu(x_I, x_{\Lambda \setminus I})}{\mu_{\Lambda \setminus I}(x_{\Lambda \setminus I})} \ln \frac{\mu(x_I, x_{\Lambda \setminus I})}{\mu_{\Lambda \setminus I}(x_{\Lambda \setminus I})\mu_I(x_I)} +\\
+\sum_{x_I}\mu_I(x_I)\ln \mu_I(x_I)+\sum_{x_{\Lambda \setminus I}}\mu_{\Lambda \setminus I}(x_{\Lambda \setminus I})\ln \mu_{\Lambda \setminus I} (x_{\Lambda \setminus I})
\end{multline}
has a strictly negative time derivative if $\mu(x_I, x_{\Lambda \setminus I})$ does not coincide with $\mu_I(x_I)\mu_{\Lambda \setminus I}(x_{\Lambda \setminus I})$.
(Here $0\ln 0=0$.)
% If it holds for any group of summands in~\eqref{eq:diff equation}, corresponding to the fixed frame $I$, then f
For the complete equation~\eqref{eq:diff equation} the (minus-)Shannon entropy has a strictly negative time-derivative if the equality $\mu(x_I, x_{\Lambda \setminus I})=\mu_I(x_I)\mu_{\Lambda \setminus I}(x_{\Lambda \setminus I})$ is violated for some $I \in \mathcal{J}$.

So, any trajectory of equation~\eqref{eq:diff equation} converges to the set of $\mathcal{J}$-separated measures as $t\rightarrow \infty$.

Moreover, if $i \in I$, then $\mu_i$ does not change via equation~\eqref{eq:modification of uravnenie}, since $\mu_i$ is the projection of $\mu_I$.
The same holds for $i \notin I$ because $\mu_i$ is the projection of $\mu_{\Lambda\setminus I}$.
Therefore $\mu_i$ is not time depended for any solution~\eqref{eq:diff equation}.
Thus, any limit point of a solution of this equation is a $\mathcal{J}$-separated measure $\nu$ with marginal distributions $\nu_i=\mu_i^0$.
\end{proof}

Let us describe this set of measures $\nu$ and prove the statement of Remark~\ref{rem:measure product}.
That follows from the next Theorem.

\begin{theorem}
	\label{thm:measure is bernoulli}
	Let $\xi_i, i \in \Lambda$ be the random variables. 
	Suppose for any $i, j$, there is a subset $I \subset \Lambda$ containing exactly one element from $\{i, j\}$ and such that $\xi_I=\{\xi_i, i \in I\}$ and $\xi_{\Lambda \setminus I}=\{\xi_i, i \in \Lambda \setminus I\}$ are independent.
	Then all variables $\xi_i, i \in \Lambda$ are independent.
\end{theorem}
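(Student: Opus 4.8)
The plan is to repackage the hypothesis and then amplify it. Let $\mathcal{F}$ denote the family of all subsets $I\subseteq\Lambda$ for which the groups $\xi_I$ and $\xi_{\Lambda\setminus I}$ are independent, so that $\mu=\mu_I\otimes\mu_{\Lambda\setminus I}$ for each $I\in\mathcal{F}$. The assumption of the theorem says exactly that for every pair $i\ne j$ some member of $\mathcal{F}$ contains precisely one of $i,j$. I would first show that combining several independent splittings produces a \emph{finer} mutual independence, and then observe that, since $\Lambda$ is finite, already finitely many members of $\mathcal{F}$ separate all points, which forces the full product structure $\mu=\prod_i\mu_i$.

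The technical core is a lemma proved by induction on the number of chosen splittings: for any $I_1,\dots,I_k\in\mathcal{F}$, the blocks of the partition $\mathcal{P}_k$ of $\Lambda$ into the atoms of the algebra generated by $I_1,\dots,I_k$ (that is, $i$ and $j$ share a block iff $i\in I_m\Leftrightarrow j\in I_m$ for every $m$) are mutually independent. The base case $k=1$ is the definition of $\mathcal{F}$. For the inductive step I would write the atoms of $\mathcal{P}_k$ as $C_1,\dots,C_m$, so that the inductive hypothesis reads $\mu=\prod_\ell\mu_{C_\ell}$. Adjoining $J=I_{k+1}$ cuts each block as $C_\ell=A_\ell\sqcup B_\ell$ with $A_\ell=C_\ell\cap J$ and $B_\ell=C_\ell\setminus J$, whence $J=\bigsqcup_\ell A_\ell$ and $\Lambda\setminus J=\bigsqcup_\ell B_\ell$; marginalising the product gives $\mu_J=\prod_\ell\mu_{A_\ell}$ and $\mu_{\Lambda\setminus J}=\prod_\ell\mu_{B_\ell}$.

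The decisive point, and the step I expect to be the main obstacle, is the passage from this aggregate factorisation to a factorisation of each individual block. From $\xi_J\perp\xi_{\Lambda\setminus J}$ one gets
\[
\prod_\ell \mu_{C_\ell}\;=\;\mu\;=\;\mu_J\otimes\mu_{\Lambda\setminus J}\;=\;\Big(\prod_\ell\mu_{A_\ell}\Big)\otimes\Big(\prod_\ell\mu_{B_\ell}\Big).
\]
Marginalising both sides onto a single block $C_{\ell_0}$, and using that each of the remaining factors is a probability distribution and hence sums to $1$, yields $\mu_{C_{\ell_0}}=\mu_{A_{\ell_0}}\otimes\mu_{B_{\ell_0}}$ for every $\ell_0$ (blocks that $J$ leaves intact give a trivial identity and cause no trouble). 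Substituting these back gives $\mu=\prod_\ell(\mu_{A_\ell}\otimes\mu_{B_\ell})$, i.e. the refined pieces $\{A_\ell,B_\ell\}_\ell$, which are exactly the atoms of $\mathcal{P}_{k+1}$, are mutually independent. This closes the induction.

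Finally I would conclude as follows. For each pair $i\ne j$ choose $I_{ij}\in\mathcal{F}$ separating them; since $\Lambda$ is finite this is a finite subfamily, and by construction any two distinct points are separated by some $I_{ij}$ and therefore lie in different atoms of the algebra it generates. Hence every atom is a singleton, and applying the lemma to this subfamily shows that the singletons $\{\xi_i\}_{i\in\Lambda}$ are mutually independent, which is the assertion. The whole argument relies only on marginalisation of product measures and the finiteness of $\Lambda$ (the alphabets $K_i$ being finite), so no additional hypotheses are required.
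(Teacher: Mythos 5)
Your proof is correct, but it takes a genuinely different route from the paper's. The paper argues by induction on the cardinality of $\Lambda$: it fixes one frame $M$, notes that the induced frame systems $\mathcal{J}_M$ and $\mathcal{J}_{\Lambda\setminus M}$ on $M$ and $\Lambda\setminus M$ are again $T_0$-systems (and still consist of independent splittings, since marginals of product measures are products), applies the induction hypothesis to each half separately, and then glues the two halves together using the independence of $\xi_M$ and $\xi_{\Lambda\setminus M}$. You instead induct on the number of frames used: your refinement lemma shows that for \emph{any} collection $I_1,\dots,I_k$ of independent splittings the atoms of the algebra they generate are mutually independent, and the separation hypothesis enters only at the very end, to force those atoms to be singletons. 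Both arguments rest on the same elementary fact (marginalising a product measure over disjoint blocks factorises block by block), but they are organised differently: the paper's divide-and-conquer is shorter, while your lemma is more general --- it needs no $T_0$ assumption at all, and therefore also yields in one pass the description given in Remark~\ref{rem:not T-0 system} for non-$T_0$ systems, namely that the measure factorises over the equivalence classes (the atoms of the partition generated by all frames). One small observation: your ``decisive point'' is settled the moment you write the displayed chain of equalities --- the right-hand side $\bigl(\prod_\ell\mu_{A_\ell}\bigr)\otimes\bigl(\prod_\ell\mu_{B_\ell}\bigr)$ \emph{is} already the product over the atoms of $\mathcal{P}_{k+1}$, so the subsequent marginalise-onto-$C_{\ell_0}$-and-substitute-back step is harmless but redundant.
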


\begin{proof}
 The collection of subsets $I$ (frames) is $T_0$-system in the sense of the definition above.
 For any $\Lambda$ and $M\subset \Lambda$ by $J_M$ we denote the set of frames $I \cap M$ on the
 set $M$ for $I \in J$.
 It is evident that $J_M$ is a $T_0$-system if $J$ is a $T_0$-system. 
 Now we use the induction on the cardinality of $\Lambda$.
 For $|\Lambda|=1$ the statement is trivial. 
 Let $|\Lambda|>1$.
 Fix some frame $M \in J$ which is a proper subset of $\Lambda$.
 Then $J_M$ and $J_{\Lambda\setminus M}$ are $T_0$-systems.
 Cardinalities of $M$ and $\Lambda \setminus M$ are less than the cardinality of $\Lambda$.
 So we can apply the induction hypothesis and conclude that the random variables $x_i$, $i \in M$, are 
 mutually independent and random variables $x_i$, $i \in \Lambda\setminus M$, are mutually independent.
 But these two sets of random variables are independent by the condition of the theorem.
 So all random variables $x_i$, $i \in \Lambda$, are mutually independent.
 By the induction the theorem follows.
\end{proof}

An example of an application of this Theorem following statement.
\begin{corollary}
	\label{cor: matrix}
	If there is a random matrix $\xi_{i,j}$ such that its columns are independent and its rows are independent.
	Then all elements $\xi_{i,j}$ are independent.
\end{corollary}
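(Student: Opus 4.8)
The plan is to deduce the corollary directly from Theorem~\ref{thm:measure is bernoulli}. I would take the index set $\Lambda$ to be the set of all matrix positions $(i,j)$ and regard the entries $\xi_{i,j}$ as the family of random variables indexed by $\Lambda$. It then suffices to exhibit, for every pair of distinct positions, a frame $I \subset \Lambda$ separating them and on which the independence hypothesis of the theorem holds.

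First I would record the two families of frames furnished by the hypotheses. Since the columns are mutually independent, each single column $C_j = \{(k,j) : k\}$ is independent of the union of the remaining columns, so $\xi_{C_j}$ and $\xi_{\Lambda \setminus C_j}$ are independent; likewise, since the rows are mutually independent, each single row $R_i = \{(i,l) : l\}$ satisfies that $\xi_{R_i}$ is independent of $\xi_{\Lambda \setminus R_i}$. The only nonroutine point here is the passage from mutual independence of the whole collection of columns (respectively rows) to the independence of one block from the complementary block; this is the standard grouping property of mutually independent families.

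Next I would verify the $T_0$-type separation hypothesis of Theorem~\ref{thm:measure is bernoulli}. Take two distinct positions $(i,j) \neq (i',j')$. If they lie in different rows ($i \neq i'$), the row frame $R_i$ contains $(i,j)$ but not $(i',j')$, hence contains exactly one element of the pair and carries the required independence. If they lie in the same row ($i = i'$) but different columns ($j \neq j'$), the column frame $C_j$ contains $(i,j)$ but not $(i,j')$ and again provides a separating independent frame. Thus for every pair of positions a suitable frame exists, so the family $\{R_i\} \cup \{C_j\}$ is a $T_0$-system satisfying the hypothesis of the theorem.

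Finally, applying Theorem~\ref{thm:measure is bernoulli} to this configuration yields that all the variables $\xi_{i,j}$, $(i,j) \in \Lambda$, are mutually independent, which is precisely the assertion of the corollary. I expect no genuine obstacle beyond stating the block-independence fact cleanly; the combinatorial verification of the separation condition is immediate from the row/column structure.
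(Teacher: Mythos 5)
Your proof is correct and follows exactly the route the paper intends: the paper presents this corollary as a direct application of Theorem~\ref{thm:measure is bernoulli}, with the matrix positions as $\Lambda$ and the rows and columns serving as the separating frames, which is precisely your construction. Your explicit verification of the grouping property (a single column being independent of the union of the others) and of the $T_0$-separation condition fills in the details the paper leaves implicit.
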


The description of limit measure $\nu$ implies the following statement. 
\begin{corollary}
	\label{cor:combinatorial refolmulation}
	If $\mathcal{Y}=\{x^1, \dots, x^m\}$ is a set of words such that all letters $x_i^j$ for given $i$ take all values from $K_i$, then any word $x \in X$ can be obtained from words of the set $\mathcal{Y}$ by the finite sequence of recombinations.
\end{corollary}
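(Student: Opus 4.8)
The plan is to run the recombination dynamics from a carefully chosen initial measure and read off the conclusion from the support of the limiting measure. Concretely, I would take $\mu^0$ to be any probability measure supported exactly on the set $\mathcal{Y}$, for instance the uniform distribution $\mu^0(x^j)=1/m$. The hypothesis that for each $i$ the letters $x_i^1,\dots,x_i^m$ exhaust $K_i$ says precisely that every marginal $\mu_i^0$ has full support, $\supp \mu_i^0 = K_i$.

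The first key step is to show that the support of the trajectory never leaves the recombination closure of $\mathcal{Y}$. Let $R(\mathcal{Y})\subseteq X$ denote the set of all words obtainable from $\mathcal{Y}$ by finitely many recombinations; by definition $\mathcal{Y}\subseteq R(\mathcal{Y})$ and $R(\mathcal{Y})$ is closed under $I$-recombination for every $I\in\mathcal{J}$. I claim the set of measures supported in $R(\mathcal{Y})$ is invariant under equation~\eqref{eq:diff equation}. Indeed, fix $x\notin R(\mathcal{Y})$ and suppose $\supp\mu \subseteq R(\mathcal{Y})$. The loss term of $\tfrac{d\mu(x)}{dt}$ is proportional to $\mu(x)=0$, while a gain summand $\phi_I(y_I,x_I)\,\mu_I(x_I)\,\mu(x_{\Lambda\setminus I},y_I)$ can be nonzero only if some word $b\in\supp\mu$ has $b_I=x_I$ and the word $a=(y_I,x_{\Lambda\setminus I})$ lies in $\supp\mu$; but then $x=(b_I,a_{\Lambda\setminus I})$ is the $I$-recombination of $a$ with $b$, forcing $x\in R(\mathcal{Y})$, a contradiction. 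Hence $\tfrac{d\mu(x)}{dt}=0$ on this set, and since $\mu^0$ is supported in $R(\mathcal{Y})$ we conclude $\supp\mu^t\subseteq R(\mathcal{Y})$ for all $t$.

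The second step invokes the convergence theorem. Since $\mathcal{J}$ is a $T_0$-system, Theorem~\ref{thm:mu0 goes to bernoulli} together with Remark~\ref{rem:measure product} gives $\mu^t\to\nu=\prod_i\mu_i^0$. Because every marginal has full support, $\nu(x)=\prod_i\mu_i^0(x_i)>0$ for every $x\in X$, that is $\supp\nu=X$. As $X$ is finite, the convergence $\mu^t(x)\to\nu(x)>0$ yields a single time $T$ with $\mu^T(x)>0$ simultaneously for all $x\in X$, so $\supp\mu^T=X$.

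Combining the two steps finishes the argument: $X=\supp\mu^T\subseteq R(\mathcal{Y})$, so every word $x\in X$ is obtainable from $\mathcal{Y}$ by a finite sequence of recombinations. I expect the only genuinely delicate point to be the support-invariance claim of the second paragraph; one must verify that new mass can appear only at words lying one recombination away from the current support, which is exactly the combinatorial content of the gain term. The remainder is a direct application of Theorem~\ref{thm:mu0 goes to bernoulli} and the positivity of the product measure $\nu$.
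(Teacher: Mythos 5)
Your proposal is correct and follows essentially the same route as the paper's own proof: start the recombination dynamics from a measure with $\supp\mu^0=\mathcal{Y}$, use Theorem~\ref{thm:mu0 goes to bernoulli} together with Remark~\ref{rem:measure product} to get convergence to the strictly positive product measure $\prod_i\mu_i^0$, and conclude that $\supp\mu^T=X$ at some finite time $T$. The only difference is that you explicitly prove the support-invariance claim $\supp\mu^t\subseteq R(\mathcal{Y})$ by inspecting the gain and loss terms of equation~\eqref{eq:diff equation}, a step the paper passes over as immediate; this adds rigor but is not a different argument.
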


Informally, from the set of genoms having all possible letters on any place we can obtain any genom by recombinations.

\begin{proof}
	Conisder the non-linear Markov recombination process with the inital measure $\mu^0$ such that $\supp \mu^0 = \mathcal{Y}$.
	Then as $t$ tends to infinity the measure $\mu^t$ tends to the product measure with strictly positive marginals.
	So the measure $\mu^t$ is strictly positive for sufficiently large $t$.
	Thus just means that any word can be obtained from the words of the set $\mathcal{Y}$ by the some finite sequence of recombination.
\end{proof}

This Corollary can also be proved without probability arguments using the induction on the cardinality as in the proof of Theorem~\ref{thm:measure is bernoulli}.

From Corollary~\ref{cor:combinatorial refolmulation} it easily follows the next statement.

\begin{corollary}
	% [\ref{lem:R fill the space}']
	\label{cor:R fill the space modification}
	If $\mathcal{J}$ is $T_0$-system and all the projections~$\mu_i^0$ are strictly positive then $\supp \mu^t = X$.
\end{corollary}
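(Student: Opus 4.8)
The plan is to combine the combinatorial reachability supplied by Corollary~\ref{cor:combinatorial refolmulation} with an analysis of how the support of $\mu^t$ evolves under equation~\eqref{eq:diff equation}. First I would translate the hypothesis into the hypothesis of that corollary: the strict positivity of every projection $\mu_i^0$ means that for each coordinate $i$ and each letter $k\in K_i$ there is a word in $\mathcal{Y}:=\supp\mu^0$ whose $i$-th letter equals $k$, i.e.\ the letters occurring in position $i$ exhaust $K_i$ for every $i$. Hence Corollary~\ref{cor:combinatorial refolmulation} applies and guarantees that every word $x\in X$ can be produced from $\mathcal{Y}$ by a finite sequence of $I$-recombinations with $I\in\mathcal{J}$. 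It then remains to promote this combinatorial reachability to the assertion $\supp\mu^t=X$ for every $t>0$ (for $t=0$ the support is only $\mathcal{Y}$).

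The key structural observation is that the support can only grow. Writing the loss part of~\eqref{eq:diff equation} as $c(x,t)\,\mu(x)$ with $c(x,t)=\sum_I\sum_{y_I}\phi_I(x_I,y_I)\mu_I^t(y_I)\ge 0$ bounded, and noting that the gain part is nonnegative, one obtains $\frac{d}{dt}\mu^t(x)\ge -c(x,t)\,\mu^t(x)$, so a Gr\"onwall-type estimate gives $\mu^t(x)\ge \mu^s(x)\exp\!\big(-\int_s^t c(x,\tau)\,d\tau\big)$ for $t\ge s$; thus once a word lies in the support it stays there, and $\supp\mu^s\subseteq\supp\mu^t$ for $0\le s\le t$. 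Next I would record the one-step spreading mechanism: if a word $x=(a_I,b_{\Lambda\setminus I})$ is the $I$-recombination of two words $a,b\in\supp\mu^\tau$, then in the gain term for $x$ the summand $\phi_I(b_I,a_I)\,\mu_I^\tau(a_I)\,\mu^\tau(b)$ is strictly positive (using $\phi_I>0$, $\mu_I^\tau(a_I)>0$ since $a\in\supp\mu^\tau$, and $\mu^\tau(b)>0$), so the total gain at time $\tau$ is positive. If such an $x$ were absent from $\supp\mu^t$ for some $t>\tau$, then by monotonicity $\mu^\cdot(x)\equiv0$ on $[\tau,t]$, which forces its derivative to vanish there and contradicts the positivity of the gain at $\tau$; hence $x$ enters the support immediately after any time at which both its parents are present.

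Finally I would close by induction on the number of recombination steps. Let $R_k$ be the set of words obtainable from $\mathcal{Y}$ in at most $k$ recombinations; the base case $R_0=\mathcal{Y}\subseteq\supp\mu^t$ for all $t>0$ is monotonicity, and the step $R_{k+1}\subseteq\supp\mu^t$ for all $t>0$ follows by applying the one-step spreading at an arbitrarily small $\tau\in(0,t)$ at which, by the inductive hypothesis, both parents already lie in $\supp\mu^\tau$. Since $X$ is finite and $\bigcup_k R_k=X$ by Corollary~\ref{cor:combinatorial refolmulation}, we get $R_k=X$ for large $k$, whence $\supp\mu^t=X$ for every $t>0$. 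The main obstacle, and the only genuinely non-trivial point, is precisely the passage from ``reachable in finitely many steps'' to ``present in the support for every $t>0$'': one must rule out the support being filled only asymptotically, which is exactly what the monotonicity estimate together with the derivative-at-zero argument accomplishes. Extra care is required because the equation is non-linear, so the coefficients $c(x,t)$ and the gain terms themselves depend on the unknown $\mu^t$.
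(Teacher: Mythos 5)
Your proposal is correct and takes essentially the approach the paper intends: the paper offers no explicit proof of this corollary, asserting only that it ``easily follows'' from Corollary~\ref{cor:combinatorial refolmulation}, and your argument is precisely that deduction, with the two details left to the reader --- monotonicity of the support via the Gr\"onwall-type bound, and the instantaneous entry into the support of any word whose two recombination parents are already supported --- filled in rigorously. No gaps.
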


% \begin{theorem}
% 	If $\mathcal{J}$ is a $T_0$-system and $\xi_i$, $i\in \Lambda$ is an arbitrary $\mathcal{J}$-separated set of random variables (i.~e. $\xi_I$ and $\xi_{\Lambda \setminus I}$ are independent for any $I \in \mathcal{J}$), then variables $\xi_i$, $i\in \Lambda$ are independent.
% \end{theorem}
% \begin{proof}
% 	It is sufficient to choose measurable subsets $A_i$ in the range of $\xi$ such that $0<P(A_i)<1$ ans use the previous argument.
% 	The case when $\xi_i=const$ for some $i$ follows from the previous one by reducing of $\Lambda$.
% \end{proof}

\begin{remark}
	\label{rem:not T-0 system}
	Let the system $\mathcal{J}$ be not a $T_0$-system.
	We say that two points $i, j \in \Lambda$ are equivalent if for any $I \in \mathcal{J}$ either $i\in I$, $j\in I$ or $i \notin I$, $j\notin I$.

Then the set $\Lambda$ is divided on equivalence classes $\Lambda=\cup \Lambda_j$, $j \in \tilde \Lambda$, where by $\tilde \Lambda$ we denote the set of equivalence classes.
The system $\mathcal{J}$ defines the system of frames $\mathcal{J}'$ on~$\tilde \Lambda$, which comes to be $T_0$-system.

Now $X=\prod_j \tilde K_j$ where $\tilde K_j = \prod_{i \in \Lambda_j}K_i$.
This construction evidently reduces the proof of Theorem~\ref{thm:r-stable measure if T_o-sistem} for arbitrary system $\mathcal{J}$ to the case of $T_0$-system.
It easy to formulate Theorem~\ref{thm:mu0 goes to bernoulli} for this case.
% Now the projection $\nu_j$ of the measure $\nu$ coincides with $\mu_j^0$ 
% for the class $\Lambda_j$.
The invariant measure $\nu$ which is a limit of the solution of \eqref{eq:diff equation} is $\prod_{j \in \tilde \Lambda} \mu_j^0$, where~$\mu_j^0$ 
is marginal distribution of~$\mu^0$ on the set $\Lambda_j$.
\end{remark}

\section*{Aknowledgement}
	The authors are grateful to M.~S.~Gelfand and A.~S.~Kalinina for fruitful discussions. We also thanks G.~A.~Kabatiansky for useful advice and F.~V.~Petrov for the idea of the new proof of Theorem~\ref{thm:measure is bernoulli}.

\vskip 1cm
Institute for Information Transmission Problems RAS, Bolshoy Karetny per. 19, Moscow, Russia 127994
	

\begin{thebibliography}{1}

\bibitem{batishcheva2005}
Y.~G. Batishcheva and V.~V. Vedenyapin.
\newblock The $2$-nd low of thermodynamics for chemical kinetics.
\newblock {\em Matematicheskoe Modelirovanie}, 17(8):106--110, 2005.

\bibitem{feller10introduction}
W.~Feller.
\newblock {\em {An Introduction to Probability Theory and Its Applications,
  Vol. 2}}.
\newblock Wiley, 1968.

\bibitem{malyshev2004random}
V.~A. Malyshev, S.~A. Pirogov, and A.~N. Rybko.
\newblock Random walks and chemical networks.
\newblock {\em Moscow Math. J.}, 4(2):441--453, 2004.

\bibitem{mckean1966class}
H.~P. McKean~Jr.
\newblock {A class of Markov processes associated with nonlinear parabolic
  equations}.
\newblock {\em Proceedings of the National Academy of Sciences of the United
  States of America}, 56(6):1907, 1966.

\bibitem{pirogov2013recombination}
S.~Pirogov, A.~Rybko, A.~Kalinina, and M.~Gelfand.
\newblock Recombination processes and non-linear Markov chains.
\newblock {\em arXiv preprint arXiv:1312.7653}, 2013.

\end{thebibliography}
\end{document}